\documentclass[letterpaper, 11pt]{amsart}
\usepackage{mathtools}
\usepackage{amsmath}
\usepackage[T1]{fontenc}
\usepackage{amssymb}
\usepackage{yhmath}
\usepackage{comment}
\usepackage{graphicx} 
\usepackage{ mathrsfs }
\usepackage{bbm} 
\usepackage{xcolor}
\usepackage{tikz-cd}
\usepackage{tikz}
\usetikzlibrary{patterns}
\usepackage{hyperref}

\DeclareMathAlphabet{\mathpzc}{OT1}{pzc}{m}{it}

\usepackage{thmtools}
\usepackage{thm-restate}

\usepackage{caption}

\newtheorem{theorem}{Theorem}[section]

\newtheorem*{claim*}{Claim}

\newtheorem{lemma}[theorem]{Lemma}

\newtheorem{proposition}[theorem]{Proposition}

\theoremstyle{definition}
\newtheorem{definition}[theorem]{Definition}

\theoremstyle{remark}
\newtheorem{remark}[theorem]{Remark}

\numberwithin{equation}{section}

\DeclareMathOperator{\Isom}{\operatorname{Isom}}

\DeclareMathOperator{\dist}{d}

\DeclareMathOperator{\Fc}{\mathcal{F}}

\DeclareMathOperator{\Tc}{\mathcal{T}}
\DeclareMathOperator{\Uc}{\mathcal{U}}

\DeclareMathOperator{\Nb}{\mathbb{N}}

\DeclareMathOperator{\Rb}{\mathbb{R}}

\DeclareMathOperator{\Zb}{\mathbb{Z}}

\newcommand{\Ga}{\Gamma}

\newcommand{\La}{\Lambda}
\newcommand{\la}{\lambda}

\newcommand{\ba}{\backslash}

\newcommand{\Mod}{\operatorname{Mod}}

\newcommand{\PMF}{\mathcal{PMF}}

\newcommand{\MF}{\mathcal{MF}}
\newcommand{\ML}{\mathcal{ML}}

\newcommand{\CAT}{\operatorname{CAT}}

\newcommand{\inte}{\operatorname{int}}

\newcommand{\len}{\operatorname{len}}

\begin{document}

\title[Non-arithmeticity of length spectra of subgroups of MCG]{Non-arithmeticity of length spectra of subgroups of mapping class groups}

\author{Inhyeok Choi}
\address{School of Mathematics, KIAS, Hoegi-ro 85, Dongdaemun-gu, Seoul 02455, South Korea 
\newline
\indent Simons Laufer Mathematical Sciences Institute, Berkeley, CA 94720}
\email{inhyeokchoi48@gmail.com}

\author{Dongryul M. Kim}
\address{Simons Laufer Mathematical Sciences Institute, Berkeley, CA 94720}
\email{dongryul.kim97@gmail.com}

\date{\today}

\begin{abstract}
In this paper, we prove that every non-elementary subgroup of the mapping class group of a surface has non-arithmetic Teichm\"uller length spectrum. Namely, Teichm\"uller translation lengths of its pseudo-Anosov elements generate a dense additive subgroup of $\mathbb{R}$. We prove this by introducing the notion of cross-ratios on $\mathcal{MF}$ and $\mathcal{PMF}$, and studying its geometric and dynamical properties, despite the lack of negatively curved features of the Teichm\"uller space nor the conformal geometry on $\mathcal{PMF}$.
\end{abstract}

\maketitle


\section{Introduction}

Let $S$ be a connected orientable surface of finite type with negative Euler characteristic. Let
$$
\Mod(S) := \operatorname{Homeo}^+(S) / \sim
$$
be the mapping class group of $S$, the group of isotopy classes of orientation-preserving homeomorphisms on $S$.

Teichm\"uller space $\Tc = \Tc(S)$ is the space of all marked Riemann
surface structures on $S$, equipped with a natural metric $\dist_{\Tc}$, called the Teichm\"uller metric. The natural $\Mod(S)$-action on $(\Tc, \dist_{\Tc})$ given by change of markings is properly discontinuous and by isometries. Hence, for each $g \in \Mod(S)$, its \emph{Teichm\"uller translation length}
$$
\len(g) := \lim_{n \to + \infty} \frac{ \dist_{\Tc}(o, g^n o)}{n}, \quad o \in \Tc
$$
is well-defined, independent of the choice of $o \in \Tc$.

For a subgroup $\Ga < \Mod(S)$, we call
$$
\len(\Ga) := \{ \len(g) : g \in \Ga \text{ pseudo-Anosov}\}
$$
the \emph{Teichm\"uller length spectrum} of $\Ga$. The quotient space $\Mod(S) \ba \Tc$ is the moduli space of Riemann surface structures on $S$, and the Teichm\"uller length spectrum $\len(\Mod(S))$ is the same as the set of lengths of all closed geodesics in the moduli space $\Mod(S) \ba \Tc$, with respect to the induced metric on it. Similarly, the Teichm\"uller length spectrum $\len(\Ga)$ of $\Ga < \Mod(S)$ is the same as the set of lengths of all closed geodesics in the associated cover $\Ga \ba \Tc$ of the moduli space.

We say that $\Ga < \Mod(S)$ has \emph{non-arithmetic} Teichm\"uller length spectrum if the additive subgroup 
$$
\langle \len(\Ga) \rangle < \Rb
$$
generated by its Teichm\"uller length spectrum is dense. We also call $\len(\Ga)$ non-arithmetic in this case.

A subgroup $\Ga < \Mod(S)$ is called \emph{non-elementary} if $\Ga$ contains two independent pseudo-Anosov mapping classes.
Our main theorem is the non-arithmeticity of Teichm\"uller length spectrum of a non-elementary subgroup.

\begin{theorem}[Non-arithmeticity] \label{thm:main}
    Let $\Ga < \Mod(S)$ be a non-elementary subgroup. Then its Teichm\"uller length spectrum $\len(\Ga)$ is non-arithmetic.
\end{theorem}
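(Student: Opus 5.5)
Following the abstract, the plan is to introduce a cross-ratio on $\MF$ (and $\PMF$) and to show that it detects non-arithmeticity, in analogy with the classical argument for negatively curved manifolds and hyperbolic groups.

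\textbf{Definition and first properties.} For four measured foliations $\xi_1,\xi_2,\eta_1,\eta_2 \in \MF$ with the relevant pairs filling, define
$$
[\xi_1,\xi_2,\eta_1,\eta_2] := \log \frac{i(\xi_1,\eta_1)\, i(\xi_2,\eta_2)}{i(\xi_1,\eta_2)\, i(\xi_2,\eta_1)}.
$$
It is invariant under rescaling each entry, so it descends to $\PMF$. It is also $\Mod(S)$-invariant. For a pseudo-Anosov $g$ with stable and unstable foliations $\xi_g^+,\xi_g^-$ and stretch factor $\lambda_g=e^{\len(g)}$, we have $g\xi_g^{\pm}=\lambda_g^{\pm1}\xi_g^{\pm}$.

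\textbf{Key identity.} Let $g,h\in\Ga$ be independent pseudo-Anosovs. For large $n$, the elements $g^nh^n$ are pseudo-Anosov. Their attracting foliation converges to $\xi_g^+$ and their repelling foliation to $\xi_h^-$, after normalization. I would compute $\lambda_{g^nh^n}$ through intersection numbers: use $i(g^nh^n\mu,\nu)$ for generic $\mu,\nu$, together with the asymptotics
$$
g^n\mu \approx \lambda_g^n\, \frac{i(\mu,\xi_g^-)}{i(\xi_g^+,\xi_g^-)}\,\xi_g^+ ,
$$
which follow from unique ergodicity and the convergence of the normalized iterates in $\MF$. The target estimate is
$$
\len(g^nh^n) - n\len(g) - n\len(h) \longrightarrow \log \frac{i(\xi_g^+,\xi_h^-)\, i(\xi_h^+,\xi_g^-)}{i(\xi_g^+,\xi_g^-)\, i(\xi_h^+,\xi_h^-)} ,
$$
which is a cross-ratio of the four fixed points. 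It suffices to obtain this via the leading eigenvalue of the "transfer" composition: taking a trace-like quantity, e.g. $i(g^nh^n\mu,\nu)^{1/m}$ along powers, gives the product of rank-one approximations. So every cross-ratio $[\xi_g^+,\xi_g^-,\xi_h^+,\xi_h^-]$ lies in the closure of $\langle\len(\Ga)\rangle$.

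\textbf{Density of cross-ratios.} If $\langle\len(\Ga)\rangle$ were discrete, say $c\Zb$, the identity forces all such cross-ratios to lie in $c\Zb$. The pairs $(\xi^+_{f},\xi^-_{f})$ for pseudo-Anosov $f\in\Ga$ are dense in $\Lambda\times\Lambda$, where $\Lambda\subset\PMF$ is the limit set, which is perfect. Cross-ratio is continuous on filling tuples, so it would be locally constant on a connected-enough set of tuples in $\Lambda^4$. That is not yet a contradiction, since $\Lambda$ may be totally disconnected. Instead I would exploit dynamics: replace $h$ by $g^k h g^{-k}$, whose fixed points converge to $\xi_g^+$. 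Then the cross-ratio varies through a sequence of distinct values converging to $0$, using a first-order expansion of intersection numbers near $\xi_g^+$. A sequence of nonzero elements of $c\Zb$ cannot tend to $0$, which gives the contradiction.

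\textbf{Main obstacle.} The hard step is the nonvanishing, i.e. showing those cross-ratios are not eventually $0$. This is an injectivity/non-degeneracy statement for intersection numbers restricted to $\Lambda$. One must rule out that $i(\cdot,\cdot)$ is "multiplicatively split" on the limit set, i.e. that
$$
i(\xi_1,\eta_1)\,i(\xi_2,\eta_2) = i(\xi_1,\eta_2)\,i(\xi_2,\eta_1)
$$
for all nearby limit points. I would first try to argue as follows. Such a factorization would make $i(\xi,\cdot)$ proportional for distinct $\xi$ on an open piece of $\Lambda$, contradicting that uniquely ergodic foliations are determined by their intersection functionals on the (rich) set $\Lambda$. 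Note that $\Lambda$ contains $\Ga$-orbits, and $\Ga$ acts with north-south dynamics. The lack of convexity and conformal structure means these continuity and expansion estimates near non-filling boundary points need care. I would restrict everything to uniquely ergodic points to keep intersection numbers continuous.
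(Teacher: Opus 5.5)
Your set-up agrees with the paper's up to the reduction step. That covers the cross-ratio, the asymptotic $\len(g^nh^n)-n\len(g)-n\len(h)\to\log\frac{i(g^+,h^-)\,i(h^+,g^-)}{i(g^+,g^-)\,i(h^+,h^-)}$, and the conclusion that, if $\langle\len(\Ga)\rangle\subset c\Zb$, the logarithm of every defined cross-ratio of points of $\La_\Ga$ lies in $c\Zb$. For the asymptotic, your sketch interchanges the limit in $n$ with a limit over powers $m$ without justification. The paper avoids this by normalizing the fixed foliations of $f_n=g^nh^n$ so that $i(f_n^+,f_n^-)=1$, writing $\lambda_{f_n}=i(f_nf_n^+,f_n^-)$, and using $\lambda_g^{-n}g^nx_n\to\frac{i(x,g^-)}{i(g^+,g^-)}g^+$ along convergent sequences $x_n\to x$.

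After the reduction, however, the argument is missing. Your concrete suggestion fails: the fixed points of $g^khg^{-k}$ are $[g^kh^\pm]$. By $\Mod(S)$-invariance, any cross-ratio of $(g^+,g^-,g^kh^+,g^kh^-)$ equals that of $(\lambda_g^{-k}g^+,\lambda_g^{k}g^-,h^+,h^-)$, which by scale-invariance does not depend on $k$. There is also no ``first-order expansion'' of intersection numbers to appeal to; in train-track coordinates $i(z,\cdot)$ is merely convex and piecewise linear. Finally, the assertion that uniquely ergodic foliations are determined by their intersection functionals restricted to $\La_\Ga$ (possibly a Cantor set) is not an available fact. It is essentially the statement to be proved. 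Restricting to uniquely ergodic points is also unnecessary, since $i$ is continuous on all of $\MF\times\MF$.

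The paper closes exactly the gap you flag as the main obstacle, using ingredients absent from your proposal.

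It takes $z=h^+$ inside a train track chart $U(\tau)$ around $g^-$, on which $g^{-2}$ acts by a linear map $A$ with leading eigenvalue $\alpha=\lambda_g^2$. It then uses the degenerating quadruple $(g^+,g^nz,g^-,g^{-n}z)$. Its cross-ratio tends to $1$, hence equals $1$ for all large $n$. This unwinds to the exact identity $i(z,g^{-2n}z)=C\alpha^n$ for all large $n$, with $C=i(g^+,z)\,i(z,g^-)/i(g^+,g^-)>0$.

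Near $\phi_\tau^{-1}(g^-)$ the function $f_\tau=i(z,\phi_\tau(\cdot))$ agrees piecewise with finitely many linear forms $L_1,\dots,L_m$. So with $v=\phi_\tau^{-1}(z)$, the products $P_n=\prod_j\bigl(L_j(A^nv)-C\alpha^n\bigr)$ vanish for all large $n$. Writing $P_n=\widehat L(\widehat A^n\widehat v)$ via a tensor-product construction, Cayley--Hamilton gives a linear recurrence with nonzero constant coefficient. Backward induction then yields $P_0=0$, i.e.\ $L_j(v)=C>0$ for some $j$.

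This contradicts $L_j(v)\le f_\tau(v)=i(z,z)=0$, which follows from Mirzakhani's convexity of $f_\tau$ (the ``Bottom of Iceberg'' lemma). Without some mechanism of this kind, turning the exact multiplicative relation along a $g$-orbit into a contradiction, your proof does not go through.
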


Note that the non-elementary hypothesis is necessary; if $\Ga$ is a cyclic subgroup generated by a single pseudo-Anosov mapping class, then $\langle\len(\Ga)\rangle \subset \Rb$ is a scaled copy of $\Zb$.

In general, the length spectrum can be similarly discussed for a general isometric action on a metric space. Its non-arithmeticity is a fundamentally important property from many geometric and dynamical aspects.
Indeed, non-arithmeticity is a necessary ingredient for mixing, equidistribution, counting results for geodesic flows, dynamics of horospherical foliations, and measure classification of horospherical-invariant measures. In general negatively curved settings, those geometry and dynamics were studied in (\cite{Dalbo2000topologie}, \cite{Roblin2003ergodicite}, etc). In the settings of subgroups of mapping class groups and Teichm\"uller spaces, study of geodesic flows was carried in \cite{gekhtman2023dynamics}, and measure classifications within $\ML$ was studied in our recent work \cite{CK_ML}. All of those results assume non-arithmeticity of length spectra. 

In the following specific cases of a non-elementary discrete subgroup $\Ga < \Isom(X)$ of isometries on a $\CAT(-1)$ space $X$, non-arithmeticity of its length spectrum is known:
\begin{enumerate}
\item $\Gamma \ba X$ is a negatively curved surface, due to Dal'bo \cite{dalbo1999remarques},
\item $X$ is a rank-one symmetric space, due to Kim\footnote{Inkang Kim, not the second-named author.} \cite{kim2006length}.
\item the limit set of $\Ga$ possesses a non-singleton connected component, due to Bourdon \cite{bourdon1995structure}.
\item $\Gamma$ contains a parabolic isometry, due to Dal'bo--Peign\'e \cite{dalbo1998some}.
\end{enumerate}
 A natural generalization to Zariski dense discrete subgroups of higher-rank Lie groups is established by Benoist \cite{Benoist2000proprietes}. 

 \begin{remark}
 We also note that, to the best of the authors' knowledge, it is an open problem whether the length spectrum of a non-elementary discrete subgroup $\Ga < \Isom(X)$ is always non-arithmetic when $X$ is a simply connected Riemannian manifold with curvature at most $-1$.
 \end{remark}

 In contrast, Teichm\"uller space is neither Gromov hyperbolic \cite{MasurWolf_Teichnothyp}  nor homogeneous except for sporadic surfaces. Besides the full mapping class group $\Mod(S)$ which contains a Veech group acting on a Teichm\"uller disk as a non-elementary Fuchsian group, Theorem \ref{thm:main} is the first result for non-arithmeticity of Teichm\"uller length spectra of  subgroups of $\Mod(S)$.

 \subsection{On the proof}
 To prove Theorem \ref{thm:main}, we introduce the notion of cross-ratios on the space $\mathcal{MF}$ of measured foliations and the space $\mathcal{PMF}$ of projective measured foliations on $S$, and studying its geometric and dynamical properties. See Definition \ref{def:CR} for precise definitions of cross-ratios.

 Showing the non-arithmeticity using cross-ratios was already done by Kim\footnote{Inkang Kim, not the second-named author.} \cite{kim2006length} for rank-one symmetric spaces. Indeed for instance in real hyperbolic case, Kim proved that if a discrete subgroup of isometries does not have non-arithmetic length spectrum, then its limit set is contained in a discrete union of proper spheres in the boundary. This can also be seen as a consequence of negatively curved geometry and the conformality of boundary action. Kim then deduced that the discrete subgroup cannot be non-elementary, based on a structure theory of rank-one Lie groups.

 In contrast, in the setting of Teichm\"uller geometry, we do not have a negatively curved geometry, homogeneity of the space, and conformality of the boundary action. To overcome this difficulty, after we introduce the notion of cross-ratios on $\MF$ and $\PMF$, we also observe useful relationships between cross-ratios and  geometric and dynamical properties of limit sets. Moreover, we further investigate the piecewise-linear structure of $\MF$ and localize dynamics by using the theory of train tracks, and then complete the proof of the non-arithmeticity by proceeding with certain inductive arguments.
 
 \begin{remark}
 We remark that an approach to proving the non-arithmeticity of Teichm\"uller length spectra (Theorem \ref{thm:main}) via train tracks and reduction to linear algebra has been attempted with a mistake in an earlier version \cite{gekhtman2023dynamics_earlier} of the work of Gekhtman--Ma \cite{gekhtman2023dynamics} as well as in Gekhtman's UChicago Ph.D. thesis \cite{Gekhtman_thesis}, which was purported to work even for non-elementary semigroups of mapping class groups. In this current paper, we take a different approach, as described above.
 \end{remark}

\subsection*{Acknowledgements}
The authors thank Mladen Bestvina, Dick Canary, Ursula Hamenst\"adt, Yair Minsky, Hee Oh, and Saul Schleimer for helpful conversations. We also appreciate Ilya Gekhtman and Biao Ma for valuable communication regarding this problem and comments on our earlier draft.

This material is based upon work supported by the National Science Foundation under Grant No. DMS-2424139, while the authors were in residence at the Simons Laufer Mathematical Sciences Institute in Berkeley, California, during the Spring 2026 semester.

Choi was supported by the Mid-Career Researcher Program (RS-2023-00278510) through the National Research Foundation funded by the government of Korea, and by the KIAS individual grant (MG091901) at KIAS.

 \section{Proof of the non-arithmeticity}

\subsection{Measured foliations and pseudo-Anosov mapping classes}

Before proving Theorem \ref{thm:main}, let us briefly review some basic facts on measured foliations and pseudo-Anosov mapping classes. For more comprehensive overview, we refer to (\cite{1979travaux}, \cite{farb2012primer}). 

We denote by $\MF = \MF(S)$ the space of equivalence classes of (singular) measured foliations on $S$, where the equivalence is generated by isotopy and Whitehead moves (see \cite[Expos{\'e} 5]{1979travaux} for details). Each element of $\MF$ can be represented by a pair $(\Fc, \mu)$ of (singular) foliation $\Fc$ on $S$ and a transverse measure $\mu$. 

The mapping class group $\Mod(S)$ acts naturally on $\MF$ by homeomorphisms.
According to the Nielsen--Thurston classification (\cite{Nielsen_classification}, \cite{thurston1988classification}, cf. \cite[Expos{\'e} 9]{1979travaux}), 
a mapping class $g \in \Mod(S)$ is called \emph{pseudo-Anosov} if there exist transverse measured foliations $(\Fc_u, \mu_u), (\Fc_s, \mu_s) \in \MF$ and a real number $\la_g > 1$ such that
$$
g \cdot (\Fc_u, \mu_u) = (\Fc_u, \la_g \cdot \mu_u) \quad \text{and} \quad g \cdot (\Fc_s, \mu_s) = (\Fc_s, \la_g^{-1} \cdot \mu_s).
$$
We call $(\Fc_u, \mu_u)$ and $(\Fc_s, \mu_s)$ the \emph{unstable and stable measured foliations} for $g$ respectively, and they are uniquely determined up to scaling of the transverse measures. The real number $\la_g$ is called the \emph{stretch factor} of $g$, and it satisfies
$$
\len(g) = \log \la_g
$$
(\cite{bers1978extremal}, \cite{thurston1988classification}). Note that $\la_{g^{-1}} = \la_g$.

The space $\PMF = \PMF(S)$ of projective measured foliations on $S$ is defined by
$$
\PMF := \MF / \sim
$$
where the projectivization is given by scaling transverse measures. Thurston compactified the Teichm\"uller space $\Tc$ using $\PMF$ as its boundary. The compactification $\Tc \sqcup \PMF$ is now referred to as the Thurston compactification, and hence $\PMF$ is also referred to as the Thurston boundary. Thurston also showed that the $\Mod(S)$-action on $\Tc$ and the induced $\Mod(S)$-action on $\PMF$ are glued together and give rise to the $\Mod(S)$-action on $\Tc \sqcup \PMF$ by homeomorphisms (\cite{thurston1988classification}, \cite{MR1435975}).

For a pseudo-Anosov $g \in \Mod(S)$, we choose 
$$g^+, g^- \in \MF$$
 unstable and stable measured foliations for $g$ respectively, which are not unique but we make certain choices.
It follows from the above discussion that their projective classes
$$[g^+], [g^-] \in \PMF$$
are distinct fixed points of $g$ acting on $\Tc \sqcup \PMF$. 
Moreover, $g$ exhibits the north-south dynamics: for $x \in \Tc \sqcup \PMF \smallsetminus \{ [g^{\pm}] \}$,
$$
g^n x \to [g^+] \quad \text{as } n \to + \infty \quad \text{and} \quad g^n x \to [g^-] \quad \text{as } n \to - \infty
$$
and the convergence is uniform on compact subsets \cite[Section 4]{mccarthy1985a-tits-alternative}  (cf. \cite[Theorem 3.5]{ivanov1992subgroups}). 
Note also that $[(g^{-1})^+] = [g^-]$ and $[(g^{-1})^-] = [g^+]$.

Two pseudo-Anosov $g, h \in \Mod(S)$ have either the common fixed points (i.e., $\{[g^{\pm}]\} = \{[h^{\pm}]\}$) or disjoint fixed points (i.e., $\{[g^{\pm}]\} \cap \{[h^{\pm}]\} = \emptyset$) \cite[Lemma 2.5]{mccarthy1989dynamics}. We say that they are \emph{independent} when they have disjoint set of fixed points in $\PMF$.
A subgroup $\Ga < \Mod(S)$ is called \emph{non-elementary} if there exist independent pseudo-Anosov $g, h \in \Ga$.

\subsection{Cross-ratios on $\MF$ and $\PMF$}

We prove Theorem \ref{thm:main} by introducing the notion of  cross-ratios on $\MF$ and $\PMF$.

For two isotopy classes $\alpha, \beta$ of simple closed curves on $S$, we denote by $i(\alpha, \beta) \in \Rb_{\ge 0}$ their \emph{geometric intersection number}, i.e., minimal number of intersection points of their representatives. Isotopy classes of (essential) simple closed curves on $S$ can be regarded as measured foliations whose transverse measures are given by intersection numbers with them, by foliating their annular neighborhoods with closed leaves isotopic to those simple closed curves.
In this regard, the function $i (\cdot, \cdot)$ continuously extends to
$$
i : \MF \times \MF \to \Rb_{\ge 0}
$$
which we also call geometric intersection number (\cite[Section 9.3]{thurston0the-geometry}, \cite[Corollary 1.11]{rees1981an-alternative}). Then $i (\cdot, \cdot)$ is invariant under the $\Mod(S)$-action and equivariant under the scaling: for $x, y \in \MF$, $g \in \Mod(S)$, and $t, s > 0$, we have
$$
i(g \cdot x, g\cdot y) = i(x, y) \quad \text{and} \quad i(t\cdot x, s\cdot y) = ts \cdot i(x, y)
$$
where $t \cdot x$ and $s \cdot y$ are obtained by scaling transverse measures of $x$ and $y$ by $t$ and $s$ respectively. 

For a pseudo-Anosov mapping class $g \in \Mod(S)$, we have 
$$i(x, g^-) > 0 \quad \text{for every }x \in \MF \text{ with } [x] \neq [g^-]$$ (\cite[Th{\'e}or{\`e}me 1, Expos{\'e} 12]{1979travaux}, \cite[Lemme 6, Lemme 16, Expos{\'e} 9]{1979travaux}, \cite[Theorem 1.12]{rees1981an-alternative}). The same property also holds for $g^+$.

\begin{definition}[Cross-ratios] \label{def:CR}
    For $x, y, z, w \in \MF$, we define their \emph{cross-ratio}
    $$
[x, y, z, w] =  \frac{i(x, w) i(y, z)}{i(x, z)i(y, w)} \in \Rb_{\ge 0}
$$
whenever $i(x, z) i(y, w) \neq 0$. Note that this is invariant under scaling transverse measures, and hence the cross-ratio is well-defined on $\PMF$ as well.
\end{definition}

In Proposition \ref{prop:CR as limit} below, we relate the cross-ratio to dynamics of pseudo-Anosov mapping classes. We begin with the following lemma, which might be standard to experts; we present the proof for the sake of completeness. With an extra element ``$0$'', we give a topology on $\MF \sqcup \{0\}$ by setting that a sequence $\{x_n\}_{n \in \Nb} \subset \MF$ converges to $0$ if the associated sequence of transverse measures converge to the zero measure.

\begin{lemma} \label{lem:iteratepA}
    Let $g \in \Mod(S)$ be pseudo-Anosov. Then the following holds:
    \begin{enumerate}
        \item For any $x \in \MF$, we have
    $$
    \la_g^{-n} g^n x \to \frac{i(x, g^-)}{i(g^+, g^-)} \cdot g^+ \in \MF \sqcup \{0\} \quad \text{as } n \to + \infty.
    $$
    \item Let $\{x_n\}_{n \in \Nb} \subset \MF$ be a sequence such that, as $n \to + \infty$, we have $x_n \to x \in \MF$ with $i(x, g^-) > 0$. Then 
    $$
    \la_g^{-n} g^n x_n \to \frac{i(x, g^-)}{i(g^+, g^-)} \cdot g^+ \quad \text{as } n \to + \infty.
    $$
    
    \item The convergence in (1) and (2) is uniform on compact subsets of $\MF \smallsetminus~\Rb_{> 0} \cdot g^-$ in the sense that for any compact $Q \subset \MF \smallsetminus \Rb_{> 0} \cdot g^-$ and an open neighborhood $\Uc \subset \MF$ of $\frac{i(Q, g^-)}{i(g^+, g^-)} \cdot g^+ \subset \MF$, 
    $$
    \la_g^{-n} g^n Q \subset \Uc \quad \text{for all large } n \in \Nb.
    $$
    \end{enumerate}

\end{lemma}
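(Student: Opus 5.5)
The plan is to combine the projective north--south dynamics on $\PMF$ with $\Mod(S)$-invariance and homogeneity of $i(\cdot,\cdot)$, which pin down the scale of $g^n x$. The key identity is
$$
i(\la_g^{-n} g^n x, g^-) = \la_g^{-n} i(g^n x, g^-) = \la_g^{-n} i(x, g^{-n} g^-) = \la_g^{-n} i(x, \la_g^{n} g^-) = i(x, g^-),
$$
using $g^{-n} g^- = \la_g^{n} g^-$, since $g g^- = \la_g^{-1} g^-$. So pairing with $g^-$ is preserved along the rescaled orbit, and it will serve as the normalization.

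For (1), I split into cases. If $[x] = [g^-]$, then $\la_g^{-n} g^n x = \la_g^{-2n} x \to 0$, and $i(x,g^-)=0$, so the claim holds in $\MF \sqcup \{0\}$. Otherwise $[x] \neq [g^-]$, so $i(x,g^-)>0$ by the positivity fact quoted before Definition \ref{def:CR}. By north--south dynamics on $\PMF$, $[g^n x] \to [g^+]$. Fix a continuous section of the projection $\MF\smallsetminus\{0\} \to \PMF$; for instance, normalize by $i(\cdot,\alpha)+i(\cdot,\beta)$ for a filling pair of curves, giving a sphere $\Sigma$. Write $\la_g^{-n} g^n x = t_n s_n$ with $s_n \in \Sigma$ and $s_n \to s_+$, where $s_+$ is the representative of $[g^+]$ in $\Sigma$. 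Then $t_n\, i(s_n, g^-) = i(x,g^-)$. Since $i(s_n,g^-) \to i(s_+,g^-) > 0$, we get $t_n \to i(x,g^-)/i(s_+,g^-)$. Therefore
$$
\la_g^{-n} g^n x \to \frac{i(x,g^-)}{i(s_+,g^-)}\, s_+ = \frac{i(x,g^-)}{i(g^+,g^-)}\, g^+,
$$
because the right-hand side is scale-invariant in the choice of representative of $g^+$.

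For (2) and (3), I run the same argument with uniformity.

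1. The main input is uniform projective convergence. For compact $Q \subset \MF\smallsetminus \Rb_{>0} g^-$, its image $[Q] \subset \PMF\smallsetminus\{[g^-]\}$ is compact (assuming $0 \notin Q$ in $\MF$). Hence $[g^n Q] \to [g^+]$ uniformly, by the uniform north--south dynamics cited from \cite{mccarthy1985a-tits-alternative}.
2. The normalization $t_n(q) = i(q,g^-)/i(s_n(q),g^-)$ then converges uniformly on $Q$, since $i$ is continuous and $i(\cdot,g^-)$ is bounded below near $s_+$ on $\Sigma$.
3. The limit map $q \mapsto \frac{i(q,g^-)}{i(g^+,g^-)} g^+$ is continuous, so the rescaled images of $Q$ eventually lie in any neighborhood $\Uc$ of its image, which gives (3).
4. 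For (2), apply (3) to the compact set $Q=\{x_n\}_n\cup\{x\}$, which avoids $\Rb_{>0} g^-$ once $n$ is large because $i(x,g^-)>0$. This gives $\la_g^{-n}g^n x_n$ close to $\frac{i(x_n,g^-)}{i(g^+,g^-)}g^+$, and that tends to the claimed limit by continuity of $i$.

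The main obstacle is step 1: the cited uniform north--south dynamics is for compact subsets of $\Tc\sqcup\PMF$ avoiding the repelling point, and I must check that $[Q]$ is such a set. I must also handle the topology of $\MF$ carefully, namely the continuity of the section and of the scaling map $\Rb_{>0}\times\Sigma \to \MF\smallsetminus\{0\}$. Since $\MF\smallsetminus\{0\} \cong \Rb_{>0}\times\PMF$ as a homeomorphism, this is routine.
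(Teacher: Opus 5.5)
Your proposal is correct and follows essentially the paper's argument: projective convergence $[g^n x]\to[g^+]$ from the north--south dynamics on $\PMF$, with the scale fixed by the invariance $i(\la_g^{-n}g^n x, g^-) = i(x, g^-)$, and the separate trivial case $[x]=[g^-]$ handled by $\la_g^{-2n}x \to 0$. The only difference is the order of the parts. The paper proves the sequential statement (2) first and deduces (1) and (3) from it, the latter by a compactness argument. You instead prove (1) pointwise, establish uniform convergence of the normalization on $Q$ to get (3), and then obtain (2) from that uniform convergence rather than from the literal wording of (3); this changes nothing of substance.
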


\begin{proof}
    We first prove (2). Let $\{x_n\}_{n \in \Nb} \subset \MF$ be a sequence such that $x_n \to x \in \MF$ with $i(x, g^-) > 0$ as $n  \to + \infty$. Since $i(x, g^-) > 0$, we have $[x] \neq [g^-] \in \PMF$. Then by the north-south dynamics of $g$ on $\PMF$, we have
    $$
    [g^n x_n] \to [g^+] \in \PMF \quad \text{as } n \to + \infty.
    $$
    Hence, there exists a sequence $\{t_n\}_{n \in \Nb} \subset \Rb_{> 0}$ such that $t_n g^n x_n \to g^+$ as $n \to + \infty$.

    As $n \to + \infty$, we have
    $$
    i(\la_g^{-n} g^n x_n, g^-) = \la_g^{-n} i(x_n, g^{-n} g^-) = i(x_n, g^-) \to i(x, g^-) > 0.
    $$
  Note that $\la_g^{-n} g^n x_n = (\la_g^{-n} t_n^{-1}) t_n g^n x_n$ for all $n \in \Nb$, and that $t_n g_n x_n \to~g^+$ as $n \to~+ \infty$. It follows from the above computation that
    $$
    \la_g^{-n} t_n^{-1} = \frac{i(\la_g^{-n}g^n x_n, g^-)}{i(t_n g^n x_n, g^-)} \to \frac{i(x, g^-)}{i(g^+, g^-)} \quad \text{as } n \to + \infty.
    $$
    Therefore,
    $$
    \la_g^{-n} g^n x_n = \left(\la_g^{-n} t_n^{-1} \right) t_n g^n x_n \to \frac{i(x, g^-)}{i(g^+, g^-)} \cdot g^+ \quad \text{as } n \to + \infty,
    $$
    as desired.

    \medskip

    We now prove (1). By (2), it suffices to consider the case that $[x] = [g^-] \in \PMF$. Then
    $$
    \la_g^{-n} g^n x = \la_g^{-2n} x \to 0 \quad \text{as } n \to + \infty.
    $$
    Since $i(x, g^-) = 0$ in this case, the desired convergence holds.

    \medskip

    Finally, (3) follows from (2).
\end{proof}

We now deduce the following relation between cross-ratios and stretch factors from Lemma \ref{lem:iteratepA}. Note that in the following, $g^n h^n$ is pseudo-Anosov for all large $n \in \Nb$ since $g$ and $h$ are independent (\cite{mccarthy1985a-tits-alternative}, \cite{ivanov1992subgroups}).

\begin{proposition} \label{prop:CR as limit}
    Let $g, h \in \Mod(S)$ be independent pseudo-Anosov mapping classes. Then 
    $$
    \frac{\la_{g^n h^n}}{\la_{g}^n \la_{h}^n} \to [g^+, h^+, g^-, h^-] \quad \text{as } n \to + \infty.
    $$
\end{proposition}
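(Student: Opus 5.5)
Let $f_n := g^n h^n$. The plan is to identify the limits of $[f_n^{\pm}]$ in $\PMF$ and then compute $\la_{f_n}$ by intersecting $f_n$-iterates against a test foliation, using Lemma \ref{lem:iteratepA} for each factor.

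\textbf{Step 1: locating the fixed points of $f_n$.} By north-south dynamics, $g^n h^n$ maps a small neighborhood of $[h^+]$ (which is away from $[g^-]$ by independence) into a small neighborhood of $[g^+]$. So for large $n$, $f_n$ maps a compact neighborhood $K$ of $[g^+]$ into itself: points near $[g^+]$ lie away from $[h^-]$, so $h^n$ sends them near $[h^+]$, and $g^n$ then sends them near $[g^+]$. Similarly $f_n^{-1} = h^{-n}g^{-n}$ maps a neighborhood of $[h^-]$ into itself. Since $f_n$ is pseudo-Anosov with north-south dynamics, this forces $[f_n^+] \to [g^+]$ and $[f_n^-] \to [h^-]$. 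Normalize the representatives so that $f_n^+ \to g^+$ and $f_n^- \to h^-$ in $\MF$.

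\textbf{Step 2: the eigenvalue identity.} From $f_n f_n^+ = \la_{f_n} f_n^+$ we get
$$
\la_{f_n}\, i(f_n^+, g^-) = i(g^n h^n f_n^+, g^-).
$$
Now $g^-$ is stable for $g$, so $i(g^n y, g^-) = i(y, g^{-n}g^-) = \la_g^{n}\, i(y, g^-)$. Writing $y_n = h^n f_n^+$, this gives
$$
\la_{f_n}\, i(f_n^+, g^-) = \la_g^n\, i(h^n f_n^+, g^-).
$$
By Lemma \ref{lem:iteratepA}(2), applied to $h$ with $x_n = f_n^+ \to g^+$ and $i(g^+, h^-) > 0$ (independence), we have
$$
\la_h^{-n} h^n f_n^+ \to \frac{i(g^+, h^-)}{i(h^+, h^-)}\, h^+.
$$
Hence
$$
\frac{\la_{f_n}}{\la_g^n \la_h^n} = \frac{i(\la_h^{-n} h^n f_n^+, g^-)}{i(f_n^+, g^-)} \to \frac{i(g^+, h^-)\, i(h^+, g^-)}{i(h^+, h^-)\, i(g^+, g^-)},
$$
using continuity of $i$ and $i(f_n^+, g^-) \to i(g^+, g^-) > 0$. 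This is exactly $[g^+, h^+, g^-, h^-]$ with $x = g^+$, $y = h^+$, $z = g^-$, $w = h^-$.

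\textbf{Main obstacle.} The delicate point is Step 1: ensuring that we can choose representatives $f_n^+$ converging in $\MF$, not just projectively. The fix is to normalize $f_n^+$ so that $i(f_n^+, \alpha) = 1$ for a fixed curve $\alpha$ with $i(g^+, \alpha) > 0$. Projective convergence then upgrades to convergence in $\MF$. The ratio in Step 2 is scale invariant, so this normalization is harmless. The projective convergence itself comes from the invariant neighborhood argument together with uniqueness of the attracting fixed point.
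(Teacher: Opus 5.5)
Your proof is correct, and it is a mild but genuine streamlining of the paper's argument.

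\textbf{The paper's route.} The paper normalizes $i(f_n^+, f_n^-) = 1$ and writes $\la_{f_n} = i(f_n f_n^+, f_n^-)$. It then rewrites the ratio symmetrically as $i(\la_h^{-n} h^n f_n^+, \la_g^{-n} g^{-n} f_n^-)$. This requires three things:
\begin{enumerate}
\item tracking both $[f_n^+] \to [g^+]$ and $[f_n^-] \to [h^-]$;
\item applying Lemma~\ref{lem:iteratepA}(2) twice, once to $h$ and once to $g^{-1}$;
\item recovering the product $ts = 1/i(g^+, h^-)$ of the limiting scales from $i(f_n^+, f_n^-) = 1$.
\end{enumerate}

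\textbf{Your route.} You instead pair $f_n f_n^+$ against the fixed foliation $g^-$, which is an eigenvector of $g$. So the factor $g^n$ is stripped off exactly by $i(g^n y, g^-) = \la_g^n\, i(y, g^-)$. The remaining quotient $i(\la_h^{-n} h^n f_n^+, g^-)/i(f_n^+, g^-)$ is homogeneous of degree zero in $f_n^+$. Hence you need only the projective convergence $[f_n^+] \to [g^+]$, a single application of the lemma, and positivity of $i(g^+, g^-)$ for the denominator, with no scale bookkeeping.

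In particular, the convergence $[f_n^-] \to [h^-]$ asserted in your Step 1 is never used and can be dropped. Your forward-invariant-neighborhood argument also supplies an explicit justification of $[f_n^+] \to [g^+]$, which the paper simply asserts from north--south dynamics.

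\textbf{Comparison.} The paper's symmetric version has the appeal that the cross-ratio arises from pairing the attracting and repelling data of $f_n$ against each other. Logically, your version is shorter and uses strictly less input.
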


\begin{proof}
    For each $n \in \Nb$, let $f_n := g^n h^n \in \Mod(S)$, which is pseudo-Anosov when $n$ is large enough. For such $n \in \Nb$, we choose $f_n^{\pm} \in \MF$ such that  
     $i(f_n^+, f_n^-) = 1$.
     It then follows from the north-south dynamics of $g$ and $h$ that
     $$
    [f_n^+] \to [g^+] \in \PMF \quad \text{and} \quad [f_n^-] \to [h^-] \in \PMF \quad \text{as } n \to + \infty.
     $$
    Hence, by rescaling transverse measures of $f_n^{\pm} \in \MF$, we may assume that sequences $\{f_n^{\pm}\}_{n \in \Nb } \subset \MF$ is convergent. We denote their limits by $x, y \in \MF$, i.e.,
    $$
    f_n^+ \to x \quad \text{and} \quad f_n^- \to y \quad \text{as } n \to + \infty.
    $$

    Since $i(f_n^+, f_n^-) = 1$  for all large $n \in \Nb$, we have 
    $$
    \la_{f_n} = i( f_n f_n^+, f_n^-) = i(g^n h^n f_n^+, f_n^-),
    $$
    and hence
    \begin{equation} \label{eqn:CRproof}
    \frac{\la_{g^n h^n}}{\la_{g}^n \la_{h}^n} = i\left( \left(\la_g^{-n} g^n \right) \left(\la_h^{-n} h^n \right) f_n^+, f_n^-\right) = i\left( \la_h^{-n} h^n  f_n^+, \la_g^{-n} g^{-n} f_n^-\right) .
    \end{equation}
    Since $[x] = [g^+]$ and $[y] = [h^-]$, we in particular have
    $$
    i(x, h^-) > 0 \quad \text{and} \quad i(y, g^+) > 0
    $$
    by the independence of $g$ and $h$. Hence, by Lemma \ref{lem:iteratepA}(2), we have as $n \to + \infty$ that 
    $$
    \la_h^{-n} h^n f_n^+ \to \frac{i(x, h^-)}{i(h^+, h^-)} h^+ \quad \text{and} \quad \la_g^{-n} g^{-n} f_n^- \to \frac{i(y, g^+)}{i(g^+, g^-)} g^-.
    $$

    Now since $x = t g^+$ and $y = s h^-$ for some $t, s > 0$, it follows from above convergences and  Equation \eqref{eqn:CRproof} that 
    $$
    \frac{\la_{g^n h^n}}{\la_{g}^n \la_{h}^n} \to \frac{i(x, h^-) i(y, g^+) i(h^+, g^-)}{i(h^+, h^-) i(g^+, g^-)} = ts \cdot \frac{i(g^+,h^-)^2 i(h^+, g^-)}{i(g^+, g^-)i(h^+, h^-)} \quad \text{as } n \to + \infty.
    $$
    Since
    $1 = i(f_n^+, f_n^-) \to i(x, y) = ts \cdot i(g^+, h^-)$ as $n \to +\infty$, we also have 
    $$
    ts = \frac{1}{i(g^+, h^-)}.
    $$
    Therefore,
$$
    \frac{\la_{g^n h^n}}{\la_{g}^n \la_{h}^n} \to \frac{i(g^+,h^-) i(h^+, g^-)}{i(g^+, g^-)i(h^+, h^-)} = [g^+, h^+, g^-, h^-] \quad \text{as } n \to + \infty.
    $$
    This finishes the proof.
\end{proof}

\subsection{Proof of Theorem \ref{thm:main}}

We are now ready to prove Theorem \ref{thm:main}. Let $\Ga < \Mod(S)$ be a non-elementary subgroup and suppose to the contrary that its Teichm\"uller length spectrum $\len(\Ga)$ is not non-arithmetic. That is, for some $c \ge 0$ we have 
$$
\langle \len(\Ga) \rangle \subset c \cdot \Zb.
$$
Then for any independent pseudo-Anosov $g, h \in \Ga$, it follows from Proposition \ref{prop:CR as limit} that
\begin{equation} \label{eqn:arithmetic0}
\log [g^+, h^+, g^-, h^-] \in c \cdot \Zb.
\end{equation}

Denote by
$$\La_{\Ga} := \overline{ \{ [g^+] : g \in \Ga \text{ pseudo-Anosov}\}} \subset \PMF$$ the limit set of $\Ga$. Since $\Ga < \Mod(S)$ is non-elementary, $\La_{\Ga}$ is the unique $\Ga$-minimal subset of $\PMF$ and is perfect \cite[Theorem 4.1, Proposition 5.2]{mccarthy1989dynamics}. Moreover, the north-south dynamics of pseudo-Anosov mapping classes also gives rise to that 
$$
\{ ([g^+], [g^-]) : g \in \Ga \text{ pseudo-Anosov} \} \subset \La_{\Ga} \times \La_{\Ga}
$$
is a dense subset (cf. proof of Proposition \ref{prop:CR as limit}).
 Hence, for $x, y, z, w \in \La_{\Ga}$ such that $i(x, z)i(y, w)i(x, w)i(y, z) > 0$, it follows from Equation \eqref{eqn:arithmetic0} that
\begin{equation} \label{eqn:arithmetic}
\log [x, y, z, w] \in c \cdot \Zb.
\end{equation}
We will deduce a contradiction from Equation \eqref{eqn:arithmetic}.

Let us now turn to the train track theory. We refer the readers to (\cite{thurston0the-geometry}, \cite{PennerHarer_book}, \cite{Papadopoulos_Orsay}) for more comprehensive expository on train tracks and their various properties.
For a maximal and recurrent  train track 
$\tau$ on $S$, let $V_{\tau}$ be the real vector space of weights on the branches of $\tau$ satisfying the switch condition, and let $K(\tau) \subset V_{\tau}$ be the non-empty open convex cone consisting of weights which are positive on every branch of $\tau$. Then there exists a natural  $\Rb_{> 0}$-equivariant map $\phi_{\tau} : K(\tau) \to~\MF$ which is a homeomorphism onto its image. Moreover, the image $U(\tau) := \phi_{\tau}(K(\tau)) \subset \MF$ is a non-empty open cone in $\MF$ whose elements are carried by $\tau$ (cf. \cite[Section~II.3]{Papadopoulos_Orsay}). These $(U(\tau), \phi_{\tau})$'s form an atlas of $\MF$, giving rise to a piecewise-linear structure \cite[Proposition~9.5.8]{thurston0the-geometry} (cf. \cite[Proposition 4.3]{papadopoulos2008measured}). 
Each $(U(\tau), \phi_{\tau})$ is called the train track chart associated to $\tau$.

In the rest of the proof, $$
\text{we fix a pseudo-Anosov } g \in \Ga.
$$ Then, after replacing $g$ with its positive power if necessary, there exists a train track chart $(U(\tau), \phi_{\tau})$ containing $g^- \in \MF$ such that, $g^{-2} U(\tau) \subset U(\tau)$ and the $g^{-2}$-action on $U(\tau)$ induces an action on $V_{\tau}$ given by an invertible linear map $A : V_{\tau} \to V_{\tau}$ with the leading 
eigenvalue $\alpha := \la_g^2$ (\cite[Section~IV]{Papadopoulos_Orsay}, \cite[Theorem 4.1]{papadopoulos1987a-characterization}).

Since $\La_{\Ga}$ is perfect, there exists a pseudo-Anosov $h \in \Ga$ such that $g$ and $h$ are independent and $h^+ \in U(\tau)$; in particular, $i(h^+, g^{\pm}) > 0$. We simply write $z := h^+ \in \MF$. Then for each $n \in \Nb$,
$$
[g^+, g^n z, g^-, g^{-n} z] = \frac{i(g^+, g^{-n} z) i(g^n z, g^-)}{i(g^+, g^-) i(g^n z, g^{-n} z)} = \frac{i(g^+, \la_g^{-n} g^{-n} z) i( \la_g^{-n}  g^n z, g^-)}{i(g^+, g^-) i(\la_g^{-n} g^n z, \la_g^{-n} g^{-n} z)}.
$$
By Lemma \ref{lem:iteratepA}(1), we have $\la_g^{-n} g^{\pm n} z \mapsto \frac{i(z, g^{\mp})}{i(g^+, g^-)} \cdot g^{\pm}$, and hence 
$$
[g^+, g^n z, g^-, g^{-n} z] \to 1 \quad \text{as } n \to + \infty.
$$
On the other hand, since $i(g^+, g^-)i(g^n z, g^{-n} z)i(g^+, g^{-n} z) i(g^n z, g^-) > 0$, it follows from Equation \eqref{eqn:arithmetic} that 
$$
[g^+, g^n z, g^-, g^{-n} z] = 1 \quad \text{for all large } n \in \Nb.
$$
In other words, 
$$
i(g^n z, g^{-n} z) = \frac{i(g^+, g^{-n} z) i(g^n z, g^-)}{i(g^+, g^-)}  \quad \text{for all large } n \in \Nb.
$$
Since $i(g^n z, g^{-n} z) = i(z, g^{-2n} z)$, $i(g^+, g^{-n} z) = i(g^n g^+, z) = \la_g^n i(g^+, z)$, and $i(g^n z, g^-) = i(z, g^{-n} g^-) = \la_g^n i(z, g^-)$, we have 
\begin{equation} \label{eqn:forlarge}
i(z, g^{-2n} z) = \la_g^{2n} \cdot \frac{i(g^+, z) i( z, g^-)}{i(g^+, g^-)}  \quad \text{for all large } n \in \Nb.
\end{equation}

Now noting that $[z] = [h^+] \neq [g^-]$, it follows from (\cite[Proposition 3]{papadopoulos1986geometric}, \cite[Section 3.4]{PennerHarer_book}) that there exists a maximal and recurrent train track $\sigma$ on $S$ and associated train track chart $(U(\sigma), \phi_{\sigma} : K(\sigma) \subset V_{\sigma} \to U(\sigma))$ such that $g^- \in U(\sigma)$ and that the function
$$
i(z, \phi_{\sigma}(\cdot)) \quad \text{is linear on } K(\sigma).
$$
Let $\psi : V_{\sigma} \to \Rb$ be the linear form such that
$$
\psi(\cdot) = i(z, \phi_{\sigma}(\cdot)) \quad \text{on } K(\sigma).
$$
By \cite[Proposition 4.3]{papadopoulos2008measured}, there exists an open cone neighborhood $W \subset K(\tau)$ of $\phi_{\tau}^{-1}(g^-)$ such that $\phi_{\tau}(W) \subset U(\sigma)$ and there exist convex cones $W_1, \dots, W_m \subset W$ with non-empty interiors such that $W = \bigcup_{j = 1}^m W_j$ and 
the transition map
$$
\phi_{\sigma}^{-1} \circ \phi_{\tau} \quad \text{is linear on } W_j \text{ for each } 1 \le j \le m.
$$
For each $j$, let $\phi_j : V_{\tau} \to V_{\sigma}$ be the linear map such that
$$
\phi_j = 
\phi_{\sigma}^{-1} \circ \phi_{\tau} \quad \text{on } W_j.
$$
Define a linear form $L_j : V_{\tau} \to \Rb$ by
$$
L_j := \psi \circ \phi_j.
$$

Now define the function $f_{\tau} : K(\tau) \to \Rb$ by
$$
f_{\tau}(\cdot) = i(z, \phi_{\tau}(\cdot)).
$$
Then we have
$$
f_{\tau} = L_j \quad \text{on } W_j.
$$

We simply write
$$v := \phi_{\tau}^{-1}(z) \in K(\tau).$$
The following observation is crucial in deducing the desired contradiction. See Figure \ref{fig:iceberg} for its name.
\begin{lemma}[Bottom of Iceberg Lemma] 
    \label{lem:nonpositive}
    For each $1 \le j \le m$, we have
$$L_j(v) \le 0.$$
\end{lemma}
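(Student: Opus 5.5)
The plan is to combine two ingredients: a structural property of $f_\tau$ on the whole cone $K(\tau)$, and the exact identity \eqref{eqn:forlarge}. The anchor point is that $f_\tau(v)=i(z,z)=0$, since $\phi_\tau(v)=z$. So the lemma says that every linear piece $L_j$, extended linearly from $W_j$ out to the far-away point $v$, lies \emph{below} the actual value $f_\tau(v)=0$. This is the "bottom of the iceberg": the visible piecewise-linear surface near $\phi_\tau^{-1}(g^-)$ extrapolates under the waterline at $v$. I therefore want to show that $L_j$ is dominated by $f_\tau$ along the ray from $W_j$ towards $v$, or at least at $v$ itself. One caution: a naive superadditivity argument goes the wrong way. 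Writing $i(z,\phi_\tau(\cdot))$ as a minimum over representatives of $z$ of weighted crossing counts makes it concave, which only gives $L_j\ge f_\tau$. So the dynamics must be used.

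\textbf{Step 1 (pieces through the direction of $g^-$).} Set $p:=\phi_\tau^{-1}(g^-)$. Since $A$ has leading eigenvalue $\alpha$ with eigenvector $p$, Lemma \ref{lem:iteratepA}(1) (applied to $g^{-1}$) gives
$$
\alpha^{-n}A^n y \to \frac{i(\phi_\tau(y),g^+)}{i(g^+,g^-)}\,p
$$
for every $y\in K(\tau)$. Consequently $A^n y\in W$ for all large $n$, and $f_\tau(A^n y)$ is computed there by some $L_k$. Moreover $A^nW_j\cap W_k$ has non-empty interior for suitable $k$, because $W$ is a cone neighbourhood of $p$.

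\textbf{Step 2 (use \eqref{eqn:forlarge} as an exact affine identity).} Fix $j$, a point $w\in\inte W_j$, and $t>0$ large enough that $y_t:=tw+v\in W_j$; this is possible because $W_j$ is an open cone. Then
$$
f_\tau(y_t)=tL_j(w)+L_j(v).
$$
I will compute $f_\tau(A^n y_t)=i(g^{2n}z,\phi_\tau(y_t))$ in two ways.

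First, by piecewise linearity near $p$ and Step 1, for large $n$ this quantity equals $L_k(tA^nw+A^nv)$. Its leading coefficient in $\alpha^n$ is governed by $i(\phi_\tau(y_t),g^+)\,i(z,g^-)/i(g^+,g^-)$.

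Second, I compare it with $t f_\tau(A^n w)+f_\tau(A^n v)$. The term $f_\tau(A^n v)$ is \emph{exactly} $\alpha^n\, i(g^+,z)\,i(z,g^-)/i(g^+,g^-)$ by \eqref{eqn:forlarge}, with no error term. Combining this with concavity of $f_\tau$ and of $f_\tau\circ A^n$ should show the following: the contribution of $v$ to $L_j$ cannot exceed the value forced at $v$ by the limiting linear functional $y\mapsto i(\phi_\tau(y),g^+)\,i(z,g^-)/i(g^+,g^-)$, evaluated against $f_\tau(v)=0$. Pulling back by $A^{-n}$, which acts linearly on $V_\tau$ and preserves the decomposition into the $\alpha$-eigenline and the complementary invariant subspace, then yields $L_j(v)\le 0$.

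\textbf{Main obstacle.} The hard step is Step 2, namely transporting information from points near $p$ (where $f_\tau$ is governed by the $L_k$'s) back to the single distant point $v$ where $f_\tau$ vanishes. The plan relies on two facts. The first is that equality holds in \eqref{eqn:forlarge} for \emph{all} large $n$, not merely asymptotically; this is what kills every subleading eigen-component of $A^n v$ in the relevant linear forms. The second is convexity of each $W_j$, which allows the segment from $tw$ to $tw+v$ to be kept inside one linear piece. If the direct comparison fails, I would instead restrict to the invariant subspace of $A$ complementary to $p$. There I would show that $L_k$ vanishes on the $r_n:=A^nv-\alpha^n c\,p$ components, and then use the linearity of $\phi_j$ together with the identity $f_\tau(v)=0$ to read off the sign of $L_j(v)$.
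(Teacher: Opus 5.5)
Your proposal has a genuine gap: it gets the key structural property of $f_\tau$ backwards. You drop the direct route because you take $f_\tau=i(z,\phi_\tau(\cdot))$ to be concave on $K(\tau)$. In fact it is \emph{convex} there, and that convexity (\cite[Theorem A.1]{mirzakhani}) is exactly what the paper uses. Concavity is ruled out by an elementary check. We have $f_\tau\ge 0$ and $f_\tau(v)=i(z,z)=0$ at a point $v$ of the open cone $K(\tau)$. A nonnegative concave function that vanishes at an interior point vanishes on a neighbourhood of it, since $0=f(v)\ge\frac12 f(v+d)+\frac12 f(v-d)\ge 0$. But $i(h^+,x)>0$ whenever $[x]\neq[h^+]$, so $f_\tau$ does not vanish near $v$.

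The ``minimum of crossing counts linear in the weights'' picture fails precisely because $z$ is carried by $\tau$. Already on the once-punctured torus, with the standard track whose chart is the open first quadrant, $i((1,1),(p,q))=|q-p|$ is V-shaped with its minimum at $v$. Your Step 2 then relies on this false concavity (for $f_\tau$ and for $f_\tau\circ A^n$). Its conclusion (``should show'', ``pulling back by $A^{-n}$ \dots\ yields $L_j(v)\le 0$'') is not an argument.

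The dynamics also cannot supply the sign. The exact identity \eqref{eqn:forlarge}, together with piecewise linearity and the Cayley--Hamilton recurrence, is what the paper uses afterwards to force $L_j(v)=C>0$ for \emph{some} $j$. The lemma is an unconditional, static fact about $f_\tau$, and its job is to contradict that conclusion.

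The repair is already contained in your Step 2 setup. Since $f_\tau$ is convex and positively homogeneous of degree one, it is subadditive on $K(\tau)$. So for $w\in\inte W_j$ and $t$ large enough that $tw+v\in W_j$,
\[
tL_j(w)+L_j(v)=f_\tau(tw+v)\le t f_\tau(w)+f_\tau(v)=tL_j(w),
\]
hence $L_j(v)\le 0$. This is the paper's argument up to reparametrization. The paper compares the convexity bound $f_\tau((1-t)u+tv)\le(1-t)f_\tau(u)+tf_\tau(v)$ with the exact value $(1-t)L_j(u)+tL_j(v)$, which holds for small $t$ when $u\in\inte W_j$. Neither your Step 1 nor the identity \eqref{eqn:forlarge} is needed.
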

\begin{proof}
Let $u \in \inte W_j$ and for $t \in [0, 1]$, let $u_t := (1 - t) u + t v \in K(\tau)$. Then by the convexity of $f_{\tau}$ proven in \cite[Theorem A.1]{mirzakhani} (cf.  \cite[Theorem 4.1]{francisco}), 
$$
f_{\tau}(u_t) \le (1 - t) f_{\tau}(u) + t f_{\tau}(v).
$$
On the other hand, for small enough $t > 0$, $u_t \in W_j$ and hence
$$
f_{\tau}(u_t) = L_j(u_t) = (1- t)L_j(u) + t L_j(v) = (1 - t) f_{\tau}(u) + t L_j(v).
$$
Combining them, we have
$$
L_j(v) \le f_{\tau}(v) = i(z, z) = 0.
$$
\end{proof}

\begin{figure}[h]
    \begin{tikzpicture}

  \fill[white] 
    (-1.2,0) -- (-0.9,0.8) -- (-0.4,1.5) -- (0.2,1.8) 
    -- (0.7,1.2) -- (1.1,0.6) -- (1.4,0) -- cycle;

  \fill[white] 
    (-1.2,0) -- (-2.2,-0.8) -- (-1.8,-2.2) -- (-0.8,-3.2)
    -- (0.4,-3.5) -- (1.4,-2.8) -- (2.0,-1.4) -- (1.4,0) -- cycle;

  \fill[cyan!10, opacity=0.5] (-5,-5) rectangle (5,0);

  \draw[cyan!30, thick] (-5,0) -- (5,0);

    \draw[very thick, red] (1.4 + 0.6 * 2.5, -2.8 + 1.4 * 2.5) -- (1.4,-2.8) -- (0.4,-3.5)
    -- (-0.8,-3.2) -- (-1.8,-2.2) -- (-1.8 - 0.4*2.5, -2.2 + 1.4*2.5);
    \draw[red] (1.4 + 0.6 * 2.5, -2.8 + 1.4 * 2.5) node[above] {$f_{\tau}$};

    \draw[very thick, blue] (-0.8 + 2,-3.2 - 2) -- (-1.8 - 2.5,-2.2 + 2.5);
    \draw[blue] (-1.8 - 2.5,-2.2 + 2.5) node[above] {$L_j$};

  \draw[gray, thick]
    (-1.2,0) -- (-0.9,0.8) -- (-0.4,1.5) -- (0.2,1.8)
    -- (0.7,1.2) -- (1.1,0.6) -- (1.4,0)
    -- (2.0,-1.4) -- (1.4,-2.8) -- (0.4,-3.5)
    -- (-0.8,-3.2) -- (-1.8,-2.2) -- (-2.2,-0.8) -- cycle;

    \filldraw (0.4, -5) circle(1.5pt);
    \draw (0.4, -5) node[left] {$v$};
    \draw[dashed] (0.4, -5) -- (0.4, -3.5);

  \draw[gray!70]
    (-0.9,0.8) -- (0.7,1.2)
    (-0.4,1.5) -- (-0.1,0)
    (0.2,1.8) -- (0.4,0)
    (-1.5,-1.5) -- (1.2,-1.7)
    (-0.8,-3.2) -- (-0.1,-0.2)
    (0.4,-3.5) -- (0.8,-0.2);


    \filldraw (0.4, -3.5) circle(1.5pt);
    \draw (0.5, -3.5) node[right] {$f_{\tau}(v) = 0$};

    \filldraw (0.4, -4.4) circle(1.5pt);
    \draw (0.5, -4.4)  node[right] {$L_j(v) \le f_{\tau}(v)$};

    \end{tikzpicture}
    \caption{Iceberg on $f_{\tau}$} \label{fig:iceberg}
\end{figure}

Now we set 
$$
C := \frac{i(g^+, z) i(z, g^-)}{i(g^+, g^-)} > 0.
$$
Recalling $\alpha = \la_g^2$, it follows from Equation \eqref{eqn:forlarge} that 
$$
f_{\tau}(A^n v) = C \cdot \alpha^n \quad \text{for all large } n \in \Nb.
$$
Since $[g^{-2n} z] \to [g^-]$ in $\PMF$ as $n \to + \infty$, we have $[A^n v] \to [\phi_{\tau}^{-1}(g^-)]$ as $n \to + \infty$, after projecting to some unit sphere in $K(\tau) \subset V_{\tau}$. Hence,
$$A^n v \in W \quad \text{for all large } n \in \Nb.$$

For each $n \ge 0$, set
$$
P_n := \prod_{j = 1}^m \left( L_j(A^n v) - C \cdot \alpha^n \right).
$$
Then for all large $n \in \Nb$, it follows from $A^n v \in W$ that $L_j(A^n v) = f_{\tau}(A^n v) = C \cdot \alpha^n$ for some $1 \le j \le m$. This implies 
\begin{equation} \label{eqn:vanishingPn}
P_n  = 0 \quad \text{for all large } n \in \Nb.
\end{equation}

Now for each $ 1 \le j \le m$, define  the vector space $V_j := V_{\tau} \times \Rb$ and consider the linear form $\widehat L_j : V_j \to \Rb$ defined by
$$
\widehat L_j (u, s) := L_j(u) - C \cdot s.
$$
We also define $A_j : V_j \to V_j$  by
$$
A_j(u, s) := (A u, \alpha s),
$$
which is an invertible linear map.

We then consider tensor products
$$
\widehat V := \bigotimes_{j = 1}^m V_j, \quad \widehat L := \bigotimes_{j = 1}^m \widehat L_j, \quad \text{and} \quad \widehat A := \bigotimes_{j = 1}^m A_j.
$$
Then $\widehat L : \widehat V \to \Rb$ is a linear form and $\widehat A : \widehat V \to \widehat V$ is an invertible linear map.

Let  
$$
\chi_{\widehat A}(t) := t^{d} + c_{d-1} t^{d-1} + \cdots + c_1 t + c_0
$$
be the characteristic polynomial of $\widehat A$. Since $\widehat A$ is invertible, $c_0 \neq 0$.
By Cayley--Hamilton theorem, 
$$
\widehat A^d + c_{d-1} \widehat A^{d-1} + \cdots + c_1 \widehat A + c_0 \operatorname{Id} = 0.
$$
In other words, for each $n \ge 0$,
$$
\widehat A^{n+d} + c_{d-1} \widehat A^{n + d-1} + \cdots + c_1 \widehat A^{n+1} + c_0 \widehat A^n = 0.
$$
In particular, setting
$$
\widehat v := \bigotimes_{j = 1}^m (v, 1) \in \widehat V,
$$
we have 
$$
\widehat A^{n+d} \widehat v + c_{d-1} \widehat A^{n + d-1} \widehat v + \cdots + c_1 \widehat A^{n+1} \widehat v + c_0 \widehat A^n \widehat v = 0,
$$
and hence
$$
\widehat L \left(\widehat A^{n+d} \widehat v\right) + c_{d-1} \widehat L \left(\widehat A^{n+d - 1} \widehat v\right) + \cdots + c_1 \widehat L \left(\widehat A^{n+1} \widehat v\right) + c_0 \widehat L \left(\widehat A^{n} \widehat v\right) = 0.
$$

Note that for each $n \ge 0$,
$$
\widehat A^n \widehat v = \bigotimes_{j = 1}^m (A^n v, \alpha^n)
$$
and hence
$$
\widehat L \left( \widehat A^n \widehat v \right) = \prod_{j = 1}^m \left( L_j (A^n v) - C \cdot \alpha^n \right) = P_n.
$$
Therefore, it follows that
$$
P_{n+d} + c_{d-1} P_{n + d-1} + \cdots + c_1 P_{n+1} + c_0 P_n = 0 \quad \text{for each } n \ge 0.
$$

On the other hand, as in Equation \eqref{eqn:vanishingPn}, $P_n = 0$ for all large $n \ge 0$. Therefore, since $c_0 \neq 0$, backward induction gives $P_n = 0$ for all $n \ge 0$. In particular,
$$
P_0 = 0.
$$
Since $P_0 = \prod_{j = 1}^m \left(L_j(v) - C \right)$, this implies
$$
L_j(v) - C  = 0 \quad \text{for some } 1 \le j \le m.
$$
However, by Bottom of Iceberg Lemma (Lemma \ref{lem:nonpositive}), $L_j(v) \le 0$. This is a contradiction to $C > 0$, completing the proof of Theorem \ref{thm:main}.
\qed

\bigskip

\bibliographystyle{alpha} 
\bibliography{ML}

\end{document}